\newtheorem{theorem}{Theorem}[section]
\newtheorem{lemma}[theorem]{Lemma}
\theoremstyle{definition}
\newtheorem{df}{Definition}
\newtheorem{example}[df]{Example}
\newcommand{\N}{\mathbb N}
\newcommand{\R}{\mathbb R}
\newcommand{\Rj}{\mathcal{R}}
\newcommand{\G}{\mathcal{G}}
\newcommand{\Lj}{\mathcal{L}}
\newcommand{\ha}{\symbol{94}}
\subjclass[2020]{11B05,11B99,26D15}  
\keywords{achievement sets, Cantorvals, Kakeya conditions}
\begin{document}

\author{Piotr Nowakowski}
\address{Faculty of Mathematics and Computer Science, University of Lodz,
ul. Banacha 22, 90-238 \L \'{o}d\'{z}, Poland
\\
ORCID: 0000-0002-3655-4991}
\email{piotr.nowakowski@wmii.uni.lodz.pl}
\title{On a new condition implying that an achievement set is a Cantorval and its applications}
\date{}
 
\begin{abstract}
Given a nonincreasing sequence of positive numbers $(a_n)$ such that the series $\sum a_n$ is convergent, by $E(a_n)$ we denote the set of all subsums of the series $\sum a_n$ and call it the achievement set of $(a_n)$. It is well known that such a set can be a finite union of closed intervals, a Cantor set or a Cantorval. We give a new condition implying that the last possibility occurs. We also show how we can use this condition to produce new achievable Cantorvals. In particular, we prove that Kakeya conditions cannot tell us more about the form of the achievement set than it was proved by Kakeya.

\end{abstract}
\maketitle
\section{Introduction}

Let $(a_n)$ be a nonincreasing sequence of positive numbers such that the series $\sum a_n$ is convergent. We define the achievement set of $\sum a_n$ (or $(a_n)$) as the set of all its subsums, that is,
$$E(a_n):=\{\sum_{n\in A}a_n\colon A\subset \N\}.$$
By $r_n$ we denote the $n$-th remainder of the series, that is, $r_n:= \sum_{i=n+1}^\infty a_i$. 

The studies on achievement sets started over 100 years ago by Kakeya. In his paper \cite{Kakeya} he proved that $E(a_n)$ is an interval if and only if $a_n \leq r_n$ for all $n \in \N$ and it is a finite union of intervals if and only if $a_n \leq r_n$ for almost all $n \in \N$. Moreover, if $a_n > r_n$ for all $n \in \N$, then $E(a_n)$ is a Cantor set (actually it suffices if $a_n > r_n$ for almost all $n$). He conjectured that only these two forms are possible. In particular, he believed that $E(a_n)$ is a Cantor set if $a_n> r_n$ for infinitely many $n$. It took about 60 years to disprove the conjecture. Eventually, Guthrie and Nymann in \cite{GN88} gave full classification of achievement sets, showing that there is one more possible form: a Cantorval. 

We say that a compact and perfect set $D \subset \R$ 
is a Cantorval if each endpoint of every gap of $D$ (a component of the complement of $D$) is an accumulation point of other gaps as well as of nontrivial components of $D$ (compare \cite{MO}).

Finding conditions which imply that an achievement set is a Cantor set or a Cantorval became one of the most important problems in research concerning achievement set (see e.g. \cite{AI,BBFS,recover,BFPW2,BFS,FN23,Jones,KMV,VMPS19}).  Despite so many papers on this topic, we are still very far away from the full characterization, when the given achievement set is a Cantor set or a Cantorval.

From Kakeya's result we know what happens if one of the inequalities $a_n\leq r_n$ or $a_n>r_n$ holds for almost all $n$. It is quite natural to ask what happens if although both inequalities hold infinitely many times, but one of them holds on a small set of indices (in some sense). In the paper \cite{MM} the authors showed that for any infinite set $K \subset \N$ there exists a nonincreasing sequence $(a_n)$ such that $\{n\in \N\colon a_n > r_n\} = K$ and $E(a_n)$ is a Cantor set. On the other hand, they also showed that for any $\alpha \in [0,1)$, there exists a nonincreasing sequence $(a_n)$ such that $\{n\in \N\colon a_n > r_n\}$ has asymptotic density equal to $\alpha$ and $E(a_n)$ is a Cantorval. In the paper \cite{PP} the same assertion was obtained for $\alpha =1$. In \cite{MM} there was posed a problem if for any infinite set $K \subset \N$ there exists a nonincreasing sequence $(a_n)$ such that $\{n\in \N\colon a_n > r_n\} = K$ and $E(a_n)$ is a Cantorval. We will solve this problem later in the paper.  

One class of sequences is especially important for research concerning achievement sets, namely multigeometric sequences, that is, sequences of the form $(k_1q,k_2q,\dots,k_mq,k_1q^2,k_2q^2,\dots,k_mq^2,k_1q^3,\dots)$ for some $m \in \N$, $k_1,\dots,k_m \in \R$, $q\in (0,1)$. Achievement sets of multigeometric sequences are fractals. Important question for such sets is a variant of Palis conjecture: is it true that every achievement set of multigeometric sequence is either of Lebesgue measure zero or has nonempty interior. Though the results in this paper do not answer this question, they may help in the future to solve this problem (which will be justified in section 4).

The paper is organized as follows. In the next section we introduce definitions, notation and basic lemmas we will use throughout the article. In the 3. section we will prove the main theorem of the paper. The last section will consist of examples and applications of the main theorem.

\section{Preliminaries}

Let $I$ be an interval. By $l(I)$ we denote its left endpoint and by $r(I)$ its right endpoint. For two sequences $t,s$ by $t\ha s$ we will denote their concatenation. By $i^{(n)}$ we will denote a sequence consisting of $n$ $i$-s (that is, a the sequence $i\ha i\ha\ldots \ha i$). By $|t|$ we denote the length of a sequence $t$. By $A^{<k}$ we denote the set of all sequences of length $j$, for $j\in \{0,1,\dots,k-1\}$ with all terms in $A$. 

Whenever $m<k$, then we assume that $\sum_{i=k}^m a_i = 0$ for any sequence $(a_i)$.

Throughout the paper we will always use the assumptions and notation given below.
\begin{df} \label{as}
Let $a=(a_n)$ be a decreasing sequence of positive terms such that $\sum_n a_n$ is convergent, $a_n > r_n$ infinitely many times, $a_n < r_n$ infinitely many times and $a_1<r_1$.
We are now going to inductively construct two sequences $(k_n)$ and $(m_n)$.
Let $k_1 =  \min\{i \in \N\colon a_i > r_i\}$ and  let $m_1 = \min\{i \in \N\colon a_{k_1+i} \leq r_{k_1+i}\}$. If we have defined $k_n$ and $m_n$ for some $n\in \N$, then let $k_{n+1} = \min\{i > k_n+m_n \colon a_i > r_i\}$ and let $m_{n+1} = \min\{i \in \N\colon a_{k_{n+1}+i} \leq r_{k_{n+1}+i}\}$. Put $K_n:=\{k_{n},\dots, k_{n}+m_n-1\}$ and $K:=\bigcup_{n=1}^\infty K_n.$
\end{df}

Let $I:=I_\emptyset = [0,r_0]$.
Let $I_0 = [0,r_1]$ and $I_1 = [a_1, r_0]$.
Let $n\in \N$ and $t \in \{0,1\}^n$. Assume that we have defined the interval $I_t$. Then we define $I_{t\ha 0} = [l(I_t),l(I_t)+r_{n+1}]$ and  $I_{t\ha 1} = [l(I_t)+a_{n+1}, r(I_t)].$ If $a_{n+1} > r_{n+1}$, then denote
by $G_t$ a gap in $I_t$, that is, the interval $(l(I_t)+r_{n+1},l(I_t)+a_{n+1})=(r(I_{t\ha 0}),l(I_{t\ha 1}))$. We call it a gap of order $n+1$ (note that $G_t$ does not have to be a gap in the whole set $E(x)$).
If $a_{n+1} \leq r_{n+1}$, then denote
by $Z_t$ an overlap in $I_t$, that is, the interval $[l(I_t)+a_{n+1},l(I_t)+r_{n+1}]= [l(I_{t\ha 1}),r(I_{t\ha 0})]$.
It is well known that
$$E(x)=\bigcap_{n\in\N} \bigcup_{t\in\{0,1\}^n} I_t.$$

For every $n \in \N$ we say that a gap $G_{0^{(k_n-1)}}$ comes from $0$ and a gap $G_{1^{(k_n-1)}}$ comes from $1$. 
Let $t\in \{0,1\}^k$ for $k \in \N$. For every $n \in \N$ such that $k_n-1>k$ we say that a gap $G_{t\ha 0^{(k_n-1-k)}}$ comes from $l(I_t)$ and a gap $G_{t\ha 1^{(k_n-1-k)}}$ comes from $r(I_t)$.
If $k+1 \in K$, then we say that a gap comes from $G_t$ if it comes from $l(G_t)=r(I_{t\ha 0})$ or from $r(G_t) = l(I_{t \ha 1})$. 
If $|t|=k_n-1$ for some $n \in \N$ and $m_n>0$, then the gaps $G_{t\ha t_{k_n-1} \ha s}$, where  $s\in \{0,1\}^{<m_n-1}$ are called neighbours of $G_t$.  

For $n \in \N$ let
$$
\mathcal{L}_n\ =\ \begin{cases}
\{G_{0^{(k_n-1)}}\}\hspace{.5in}&\text{if $m_n=1$}\\
\{G_{0^{(k_n-1)}},\, G_{0^{(k_n)}}\}&\text{if $m_n=2$}\\
\{G_{0^{(k_n-1)}},\, G_{0^{(k_n)}}\}\cup\{G_{0^{(k_n)\wedge} s}:\ |s|\le m_n-2\,\} &\text{if $m_n\ge 3$}
\end{cases}
$$
and
$$
\mathcal{R}_n\ =\ \begin{cases}
\{G_{1^{(k_n-1)}}\}\hspace{.5in}&\text{if $m_n=1$}\\
\{G_{1^{(k_n-1)}},\, G_{1^{(k_n)}}\}&\text{if $m_n=2$}\\
\{G_{1^{(k_n-1)}},\, G_{1^{(k_n)}}\}\cup\{G_{1^{(k_n)\wedge} s}:\ |s|\le m_n-2\,\} &\text{if $m_n\ge 3$}
\end{cases}.
$$
Thus, $\Lj_n$ consists of the gap which comes from $0$ of order $k_n$ and all its neighbours, and $\Rj_n$ consists of the gap which comes from $0$ of order $k_n$ and all its neighbours. 

We will now define inductively families of gaps $(\G_i)$. We assume that $k_1 > 1$. 
$\G_1$ consists of gaps coming from $0$ and $1$ of order $k_1$ and all their neighbours, that is, $\G_1:=\Lj_1 \cup \Rj_1$. 

Figure \ref{rysG} shows the set $I \setminus \bigcup\G_1$ for $m_1=3$. The letters correspond to the following gaps:
$A = G_{0^{(k_1+1)}}$, $B = G_{0^{(k_1)}}$, $C = G_{0^{(k_1)}\ha 1}$, $D = G_{0^{(k_1-1)}}$, $E = G_{1^{(k_1-1)}}$, $F = G_{1^{(k_1)}\ha 0}$, $G = G_{1^{(k_1)}}$, $H = G_{1^{(k_1+1)}}$. We have $\Lj_1 = \{A,B,C,D\}$ and $\Rj_1=\{E,F,G,H\}$. The gap $D$ comes from $0$, and $E$ comes from $1$. Gaps $A,B,C$ are the neighbours of $D$ and $F,G,H$ are neighbours of $E$.
 \begin{figure} [h] 
\includegraphics[width=0.9\textwidth]{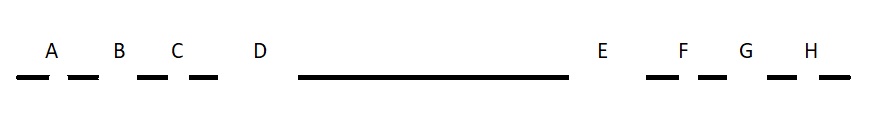}
\caption{}
\label{rysG}
\end{figure}

Assume that for some $n\in \N$ we have defined a family of gaps $\G_n$. In the family $\G_{n+1}$ there are all gaps of order $k_{n+1}$ which comes from gaps from $\bigcup_{i=1}^n \G_i$ or from $0$, or from $1$ and their neighbours. 
Formally,

$$\mathcal{G}_{n+1}=  \mathcal{L}_{n+1}\,\cup\,\mathcal{R}_{n+1}\,\cup\,\bigcup_{\substack{P\in\bigcup_{i=1}^n \G_i\\ L\in\mathcal{L}_{n+1}}}(L+\max P)\,\cup\, \bigcup_{\substack{P\in\bigcup_{i=1}^n \G_i\\ R\in\mathcal{R}_{n+1}}}(R+\min P-r_0).$$
Let $\G = \bigcup_{n\in \N} \G_n$.

%

For intervals $I,J$ we write that $I<J$ if $r(I)<l(J)$. Let $\mathcal{H}$ be a family of gaps.
We say that gaps $G<H$ are consecutive gaps in $\mathcal{H}$ if there is no gap $J \in \mathcal{H}$ such that $G<J<H$.

Before we prove the main theorem we will write in one place properties we will frequently use during the proof. They immediately follow from the definitions of intervals $I_t, G_t$ and $Z_t$. 
\begin{lemma}\label{lem}
Let $(a_n)$ be a sequence as in Definition \ref{as}. Let $k \in \N$, $t\in \{0,1\}^k$, $m\in \N$ and $j\in \N$ be such that $k+j \in K$.
Then
\begin{itemize}
\item[(i)] If $k+1 \in K$, then $l(G_t) = r(I_{t\ha 0})$, $r(G_t) = l(I_{t\ha 1}).$

\item[(ii)] If $k+1 \notin K$, then $l(Z_t) = l(I_{t\ha 1})$, $r(Z_t) = r(I_{t\ha 0}).$
 
\item[(iii)] $l(I_{t\ha 0^{(m)}}) = l(I_t)$, $r(I_{t\ha 1^{(m)}}) = r(I_t)$.

\item[(iv)] $r(G_{t\ha 0^{(j-1)}}) - l(I_t) = a_{k+j}$, $l(G_{t\ha 0^{(j-1)}}) - l(I_t) = r_{k+j},$
$r(I_t)-l(G_{t\ha 1^{(j-1)}}) = a_{k+j},$
$r(I_t)-r(G_{t\ha 1^{(j-1)}}) = r_{k+j}.$
\end{itemize}

\end{lemma}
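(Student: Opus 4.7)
My plan is to treat the lemma as pure bookkeeping and derive each clause directly from the recursive definitions of $I_t$, $G_t$, and $Z_t$ fixed earlier. Parts (i) and (ii) are essentially verbatim restatements of those definitions, part (iii) is a one-line induction, and part (iv) combines (iii) with the elementary identity $r_{k+j-1}=a_{k+j}+r_{k+j}$. No conceptual obstacle should arise; the only minor fact I need along the way is that $|I_s|=r_{|s|}$ for every finite binary word $s$, which follows by a trivial induction from the defining equalities $I_{t\ha 0}=[l(I_t),\,l(I_t)+r_{n+1}]$ and $I_{t\ha 1}=[l(I_t)+a_{n+1},\,r(I_t)]$.

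For (i), the hypothesis $k+1\in K$ amounts to $a_{k+1}>r_{k+1}$, so $G_t$ is defined, and by construction its endpoints are $l(I_t)+r_{k+1}=r(I_{t\ha 0})$ and $l(I_t)+a_{k+1}=l(I_{t\ha 1})$. For (ii), $k+1\notin K$ gives $a_{k+1}\le r_{k+1}$, so $Z_t$ is defined, and the endpoints prescribed by the definition are precisely $l(I_{t\ha 1})$ and $r(I_{t\ha 0})$.

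For (iii), I induct on $m$. The base case $m=1$ is immediate from the defining formulas for $I_{t\ha 0}$ and $I_{t\ha 1}$, and the inductive step is obtained by applying those same formulas with $t$ replaced by $t\ha 0^{(m)}$ (respectively $t\ha 1^{(m)}$), observing that concatenating one more $0$ preserves the left endpoint and concatenating one more $1$ preserves the right endpoint.

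For (iv), part (iii) gives $l(I_{t\ha 0^{(j-1)}})=l(I_t)$ and $r(I_{t\ha 1^{(j-1)}})=r(I_t)$. The hypothesis $k+j\in K$ means both gaps $G_{t\ha 0^{(j-1)}}$ and $G_{t\ha 1^{(j-1)}}$ exist. The first two identities are then direct: the endpoints of $G_{t\ha 0^{(j-1)}}$ are $l(I_{t\ha 0^{(j-1)}})+r_{k+j}=l(I_t)+r_{k+j}$ and $l(I_{t\ha 0^{(j-1)}})+a_{k+j}=l(I_t)+a_{k+j}$. For the remaining two, I use $|t\ha 1^{(j-1)}|=k+j-1$ together with the length fact to get $l(I_{t\ha 1^{(j-1)}})=r(I_t)-r_{k+j-1}$, so the endpoints of $G_{t\ha 1^{(j-1)}}$ are $r(I_t)-r_{k+j-1}+r_{k+j}=r(I_t)-a_{k+j}$ and $r(I_t)-r_{k+j-1}+a_{k+j}=r(I_t)-r_{k+j}$, using $r_{k+j-1}-r_{k+j}=a_{k+j}$. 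This is the only place where a small algebraic manipulation beyond reading off a definition is needed.
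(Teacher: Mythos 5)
Your proof is correct and takes the same route the paper intends: the paper offers no written proof at all, asserting only that the lemma's properties ``immediately follow from the definitions of intervals $I_t$, $G_t$ and $Z_t$,'' and your argument is precisely the routine verification behind that assertion --- reading (i) and (ii) off the definitions of $G_t$ and $Z_t$, a one-step induction for (iii), and for (iv) the length fact $|I_s|=r_{|s|}$ combined with $r_{k+j-1}=a_{k+j}+r_{k+j}$. Nothing is missing; you have simply made explicit the bookkeeping the paper leaves implicit.
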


\section{Main result}
Before we formulate the main result let us introduce some procedure. 
\begin{df}{\textbf{Star Procedure}}
Let $(a_n)$ be a sequence as in Definition \ref{as}. For $i \notin K$ let $\delta_i = r_i - a_i$. 
For $i<k_1-1$ let $M_i^1:= \delta_i$. Assume that for some $n \in \N$ we have defined a finite sequence $(M^n_i)_i$ of positive numbers. 
Consider the
following conditions depending on numbers $M^n_i$ and $\delta_{k_n-1}$:
$$(*)\,\,\,  \left\{ \begin{array}{ccc}
M^n_i - a_{k_n} >0 \\ 
r_{k_n}+r_{k_n+m_n-1} -M^n_i >0 \\ 
\forall_{j \in \{1,\dots, m_n-1\}} \,\, M^n_i -2a_{k_n+j} - \sum_{l=1}^{j-1} a_{k_n+l}>0;  
\end{array}%
\right. $$
$$(**)\,\,\, M^n_i-2a_{k_n} > 0;$$
$$(*')\,\,\,  \left\{ \begin{array}{ccc}
\delta_{k_n-1} - \sum_{j=1}^{m_n-1} a_{k_n+j} >0 \\ 
\forall_{j \in \{1,\dots, m_n-1\}} \,\, 2r_{k_n+j}-\delta_{k_n-1}+\sum_{i=1}^{j-1}a_{k_n+i} >0. 
\end{array}%
\right. $$
If for every number $M^n_i$ either condition $(*)$ or $(**)$ is satisfied and also $(*')$ holds, then we continue the procedure, defining a finite sequence $(M^{n+1}_i)$ of positive numbers as a sequence consisting of left hand sides of all satisfied inequalities from conditions $(*)$, $(*')$ or $(**)$ and all numbers $\delta_i$ for $k_n+m_n\leq i < k_{n+1}-1$.
Otherwise, the procedure breaks.
\end{df}

Now, we are ready to prove the main result of the paper.
\begin{theorem}\label{main}
Let $(a_n)$ be a sequence as in Definition \ref{as}. 
If for every $n \in \N$ we can define positive numbers $(M^n_i)$ as in Star Procedure (that is, Star Procedure never breaks), then $E(a_n)$ is a Cantorval.
\end{theorem}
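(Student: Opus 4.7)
The plan is an induction on $n$ maintaining two invariants: \emph{(a)} for each positive number $M_i^n$ produced by the Star Procedure at stage $n$, there is a concrete overlap region $Z_s$ of length exactly $M_i^n$ guaranteed to lie in $E(a_n)$, located in a specific way relative to the gaps of $\G_n$; and \emph{(b)} every gap in $\G_n$ is a true gap of $E(a_n)$. Once (a) and (b) are in force for all $n$, the Cantorval property of $E(a_n)$ follows from compactness (immediate), perfectness (standard since $a_n \to 0$), and the accumulation of both gaps and nontrivial components at every gap endpoint.

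The first substantive step is to decode the Star Procedure geometrically. The base numbers $M_i^1 = \delta_i = r_i - a_i$ for $i < k_1 - 1$ are exactly the lengths of the overlap intervals $Z_t$ created at the good steps preceding the first bad block; each such $Z_t$ lies in both $I_{t\ha 0}$ and $I_{t\ha 1}$, hence in $E(a_n)$. Inductively, I would trace what happens to an overlap of length $M_i^n$ as we descend through the bad block $K_n$: at the $k_n$-th split the gap of width $a_{k_n}-r_{k_n}$ is inserted, and condition $(*)$ describes the case where the overlap is just wide enough to be cut into several residual overlaps whose lengths are precisely the left-hand sides of its inequalities, while $(**)$ describes the case where it is wide enough to push through entirely, surviving as a single overlap of length $M_i^n - 2a_{k_n}$. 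Condition $(*')$ plays a distinct role: it processes $\delta_{k_n-1}$, the overlap produced just before the bad block, from which the gaps in $\G_n$ are born, and its inequalities say that this overlap is too small to cover the $\G_n$-gaps but large enough to generate new overlaps sitting between them.

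With this dictionary, the rest of the argument has three parts. First, verifying (b) for $\G_n$: the upper bounds in $(*)$, $(**)$, and the second line of $(*')$ prevent any overlap interval from protruding into a gap $G \in \G_n$, and Lemma \ref{lem} rules out far-away intervals $I_t$. Second, verifying (a) for the new values $M_i^{n+1}$: each surviving inequality produces a new overlap at a well-specified location, and the recursive shifts $L + \max P$ and $R + \min P - r_0$ in the definition of $\G_{n+1}$ place these new overlaps directly adjacent to every gap of $\G_{n+1}$. Third, concluding the Cantorval property: near each endpoint of a gap $G \in \G_n$ the family $\G_{n+1}$ supplies gaps arbitrarily close (accumulation of gaps), while the overlaps guaranteed by the $M_i^{n+1}$ supply nondegenerate subintervals of $E(a_n)$ arbitrarily close (accumulation of nontrivial components).

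The main obstacle I anticipate is the combinatorial bookkeeping. Because $\G_n$ is defined by a layered recursion involving the shifts by $\max P$ and $\min P - r_0$, matching each Star-Procedure number $M_i^{n+1}$ to its geometric overlap at the correct location in $[0,r_0]$ requires careful tracking of endpoints across the boundaries of successive bad blocks. A subtler issue is the parallel but opposite roles of $\delta_{k_n-1}$ via $(*')$ and of the $M_i^n$'s via $(*)$ and $(**)$: the former creates the gaps of $\G_n$, while the latter must survive them, and conflating these two directions is the easiest way for the induction to collapse.
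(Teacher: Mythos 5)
Your geometric dictionary for the Star Procedure is essentially the paper's: $(*)$ splits an overlap into segments whose lengths are the left-hand sides of its inequalities, $(**)$ lets it survive as one segment of length $M^n_i-2a_{k_n}$, and $(*')$ processes the overlap of length $\delta_{k_n-1}$ just before the bad block. But your induction rests on an invariant that cannot be maintained: in (a) you ask, already at stage $n$, for regions of length $M^n_i$ ``guaranteed to lie in $E(a_n)$''. Whether such a region lies in $E(a_n)$ depends on \emph{all} later stages of the procedure --- a gap of order $k_{n+j}$ for any $j$ can still puncture it --- so the stage-$n$ claim is exactly as hard as the whole theorem, and the induction is circular as stated. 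The proof must instead carry a finite-level invariant, which is what the paper does: for each overlap $Z_t$ it introduces the family $Z_t^*$ of gaps coming recursively from $l(Z_t)$, $r(Z_t)$ and earlier members of $Z_t^*$, and shows by induction that all gaps of $Z_t^*$ of order at most $k_n+m_n-1$ are pairwise disjoint, contained in $Z_t$, each contained in some interval $I_s$ with $s\in\{0,1\}^{k_n+m_n-1}$, and that the complementary segments have lengths exactly the numbers $M^{n+1}_i$; membership in $E(a_n)$ is extracted only at the very end from $E(a_n)=\bigcap_n\bigcup_t I_t$, using that the checkpoint levels are cofinal. Note also that these surviving segments are not single overlaps $Z_s$ as you assert: they are bounded by \emph{interleaved} gaps coming from the two branches $I_{t\ha 0}$ and $I_{t\ha 1}$ (and later from earlier pseudo-gaps), which is precisely why both families must be tracked inside one $Z_t$ and why $(*)$ contains the mixed inequality $r_{k_n}+r_{k_n+m_n-1}-M^n_i>0$.

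The second missing ingredient is completeness of the bookkeeping. Even with (a) repaired, you must show that every gap of the binary construction is accounted for: the paper proves by a separate induction that each gap $G_s$ either belongs to $\G$ or lies in $Z_t^*$ for some overlap $Z_t$, and then runs a pointwise covering argument showing that every $x\in I\setminus\bigcup\G$ belongs to some $I_t$, $t\in\{0,1\}^{k_n+m_n-1}$, for every $n$ --- the delicate case being $(**)$, where one must iterate through nested overlaps of increasing depth until one satisfying $(*)$ or $(*')$ is reached. Without this classification, a single untracked gap could destroy the intervals you claim, and your sketch offers no mechanism excluding that. Finally, your invariant (b) and the direct verification of accumulation of gaps and components at gap endpoints are detours the paper avoids: it never shows the $\G$-gaps are true gaps of $E(a_n)$; it concludes from $I\setminus\bigcup\G\subset E(a_n)$ that $E(a_n)$ has nonempty interior, while $a_n>r_n$ infinitely often already rules out a finite union of intervals, so the Guthrie--Nymann classification yields a Cantorval. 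Proving (b) head-on would itself require ruling out coverage of each $\G$-gap by overlaps from elsewhere --- extra work your proposal does not supply.
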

\begin{proof}

First, for every overlap $Z_t$ we will define some family of gaps $Z_t^*$.
Assume that $n \in\N$, $t \in \{0,1\}^k$, where $k_{n-1}+m_{n-1}-1\leq k \leq k_n-3$ or $k=k_{n-1}-2$, provided $m_{n-1}=1$. We will call $Z_t$ an overlap of the first type. In $Z_t^*$ we have gaps coming from $l(Z_t)=l(I_{t\ha 1})$ and $r(Z_t)=r(I_{t\ha 0})$ and their neighbours. Formally, $G_{t\ha (1-j) \ha j^{(k_n-k-2)}} \in Z_t^*$ for $j\in \{0,1\}$. If $m_n > 1$, then also 
$G_{t\ha (1-j) \ha j^{(k_n-k-1)} \ha s} \in Z_t^*$ for $j\in \{0,1\}$ and $s \in \{0,1\}^{<m_n-1}$.

Now, assume that $m_n > 1$ and $t\in \{0,1\}^{k_n-2}$. We will call $Z_t$ an overlap of the second type. Then $G_{t\ha (1-j) \ha j \ha s} \in Z_t^*$ for $j\in \{0,1\}$ and $s \in \{0,1\}^{<m_n-1}$.

Assume that we have defined all gaps of order at most $k_j+m_j-1$ which are in $Z_t^*$ (of any type). Then in $Z_t^*$ there are also all gaps of order $k_{j+1}$ (and their neighbours) which comes from $l(Z_t)$, $r(Z_t)$ or any gap from $Z_t^*$ of order at most $k_j+m_j-1$.

Let $k \notin K$ and $t \in \{0,1\}^k$. We will show that if $G,H \in Z_t^*,$ then $G,H\subset Z_t$ and $G \cap H = \emptyset$.
Observe that $|Z_t| = r_k-a_k = \delta_k$. If $Z_t$ is of the first type, then, by the assumption, condition $(*)$ or $(**)$ is satisfied for $\delta_k$ and if $Z_t$ is of the second type, then condition $(*')$ holds for $\delta_k$. Consider the cases.

{\bf Case 1.} $\delta_k$ satisfies $(*)$. 
We will show that gaps are placed like on Figure \ref{rys*} (for $m_n=3$) which presents an overlap $Z_t$ and distances between consecutive gaps (or endpoints of $Z_t$ and the nearest gaps) are equal to numbers from the sequence $(M^{n+1}_i)$. The letters correspond to the following gaps: $A=G_{t\ha 0\ha 1^{(k_n-k-2)}}$, $B=G_{t\ha 1\ha 0^{(k_n-k)}}$,
$C=G_{t\ha 0\ha 1^{(k_n-k-1)}\ha 0}$, $D=G_{t\ha 1\ha 0^{(k_n-k-1)}}$, $E=G_{t\ha 0\ha 1^{(k_n-k-1)}}$, $F=G_{t\ha 1\ha 0^{(k_n-k-1)}\ha 1}$, $G=G_{t\ha 0\ha 1^{(k_n-k)}}$, $H=G_{t\ha 1\ha 0^{(k_n-k-2)}}$.

 \begin{figure} [h] 
\includegraphics[width=0.9\textwidth]{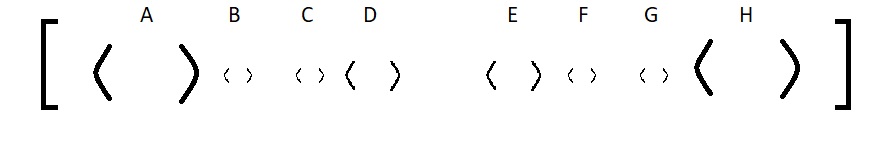}
\caption{}
\label{rys*}
\end{figure}

We have
$G_{t\ha 1 \ha 0^{(k_n-k-2)}} \subset J_{t \ha 1}$, so $l(G_{t\ha 1 \ha 0^{(k_n-k-2)}}) > l(I_{t \ha 1}) = l(Z_t)$. Moreover, by Lemma \ref{lem}, $$r(Z_t) - r(G_{t\ha 1 \ha 0^{(k_n-k-2)}}) = r(Z_t)-l(Z_t) - (r(G_{t\ha 1 \ha 0^{(k_n-k-2)}}) - l(I_{t\ha 1}))= |Z_t| - a_{k_n} = \delta_k - a_{k_n} > 0,$$ so
$G_{t\ha 1 \ha 0^{(k_n-k-2)}} \subset Z_t$. Similarly, $G_{t\ha 0 \ha 1^{(k_n-k-2)}} \subset Z_t$. By $(*)$, we also have
$$l(G_{t\ha 1 \ha 0^{(k_n-k-2)}}) - r(G_{t\ha 0 \ha 1^{(k_n-k-2)}}) = l(Z_t) + r_{k_n} - (r(Z_t)-r_{k_n}) =2r_{k_n} - |Z_t| = 2r_{k_n} - \delta_k \geq r_{k_n} +r_{k_n+m_n-1} - \delta_k >0.$$
So, $G_{t\ha 1 \ha 0^{(k_n-k-2)}} \cap G_{t\ha 0 \ha 1^{(k_n-k-2)}} = \emptyset$.

In particular, $G_{t\ha 0 \ha 1^{(k_n-k-2)}} \subset I_{t\ha 1 \ha 0^{(k_n-k-1)}}$ and $G_{t\ha 1 \ha 0^{(k_n-k-2)}} \subset I_{t\ha 0 \ha 1^{(k_n-k-1)}}$.
 
If $m_{n} >1$, then gaps $G_{t\ha 0 \ha 1^{(k_n-k-1)}}, G_{t\ha 1 \ha 0^{(k_n-k-1)}} \in Z_t^*$.
We have, by $(*)$,
$$l(G_{t\ha 1 \ha 0^{(k_n-k-1)}}) - r(G_{t\ha 0 \ha 1^{(k_n-k-2)}}) = l(Z_t) + r_{k_n+1} - (r(Z_t)-r_{k_n}) = -|Z_t| + r_{k_n}+r_{k_n+1} = -\delta_{k} + r_{k_n}+r_{k_n+1} $$$$\geq -\delta_{k} + r_{k_n}+r_{k_n+m_n-1}>0.$$
Similarly, 
$$l(G_{t\ha 1 \ha 0^{(k_n-k-2)}}) - r(G_{t\ha 0 \ha 1^{(k_n-k-1)}})  = -\delta_{k} + r_{k_n}+r_{k_n+1} >0.$$
Moreover,
$$l(G_{t\ha 0 \ha 1^{(k_n-k-1)}}) - r(G_{t\ha 1 \ha 0^{(k_n-k-1)}}) = r(Z_t) - a_{k_n+1} - (l(Z_t)+a_{k_n+1}) = |Z_t| -2a_{k_n+1} = \delta_{k}-2a_{k_n+1}>0.$$
Thus, $$G_{t\ha 0 \ha 1^{(k_n-k-2)}}<G_{t\ha 1 \ha 0^{(k_n-k-1)}}<G_{t\ha 0 \ha 1^{(k_n-k-1)}}<G_{t\ha 1 \ha 0^{(k_n-k-2)}}.$$
In particular, $G_{t\ha 0 \ha 1^{(k_n-k-1)}} \subset I_{t\ha 1 \ha 0^{(k_n-k-1)}\ha 1}$ and $G_{t\ha 1 \ha 0^{(k_n-k-1)}} \subset I_{t\ha 0 \ha 1^{(k_n-k-1)}\ha 0}$.

Assume that $G,H,G_{v}, G_{w} \in Z_t^*$, where $|v|=|w| = k_n+i$ for $0\leq i \leq m_n-3$ are such that $G<G_v<G_w<H$ are consecutive gaps from $Z_{t}^*$ of order at most $k_n+i+1$, $l(G_v)-r(G)=l(H)-r(G_w) = -\delta_{k} + r_{k_n}+r_{k_n+i+1}$ and $l(G_w)-r(G_v) = \delta_k - 2a_{k_n+i+1}-\sum_{l=1}^i a_{k_n+l}$. 
Again using $(*)$, we get
$$l(G_{v\ha 0})-r(G) = l(G_v)-a_{k_n+i+2}-r(G) = -\delta_{k} + r_{k_n}+r_{k_n+i+1} -a_{k_n+i+2} = -\delta_{k} + r_{k_n}+r_{k_n+i+2}$$$$\geq -\delta_{k} + r_{k_n}+r_{k_n+m_n-1} >0.$$
Analogously, 
$$l(G_v)-r(G_{w\ha 0}) = l(G_{v\ha 1}) - r(G_w) = l(H) - r(G_{w\ha 1}) = -\delta_{k} + r_{k_n}+r_{k_n+i+2} > 0.$$
Moreover,
$$l(G_w)-r(G) = l(G_w)-r(G_v)+r(G_v)-l(G_v)+l(G_v)-r(G) $$$$=  \delta_k - 2a_{k_n+i+1}-\sum_{l=1}^i a_{k_n+l} + a_{k_{n}+i+1}-r_{k_n+i+1} -\delta_{k} + r_{k_n}+r_{k_n+i+1}=r_{k_n}-\sum_{l=1}^{i+1} a_{k_n+l}=r_{k_n+i+1},$$
so $r(G) = l(I_{w\ha 0}).$ 
Therefore,
$$l(G_{w\ha 0})-r(G_{v\ha 0}) = r(G)+r_{k_n+i+2} - (l(G_v)-r_{k_n+i+2}) =  \delta_{k} - r_{k_n}-r_{k_n+i+1}+2r_{k_n+i+2} = \delta_k-2a_{k_n+i+2}-\sum_{l=1}^{i+1}a_{k_n+l}>0.$$
Similarly, $$l(G_{w\ha 1})-r(G_{v\ha 1}) = \delta_k-2a_{k_n+i+2}-\sum_{l=1}^{i+1}a_{k_n+l}>0.$$ 

Finally, we get that gaps $G<G_{v\ha 0}<G_{w\ha 0} < G_v$ and $G_w<G_{v\ha 1}<G_{w\ha 1} < H$ are consecutive gaps in $Z_t^*$ of order at most $k_n+i+2$. In particular, $G_{v \ha 0} \subset I_{w\ha 0\ha 0}$, $G_{w \ha 0} \subset I_{v\ha 0\ha 1}$, $G_{v \ha 1} \subset I_{w\ha 1\ha 0}$ and $G_{w \ha 1} \subset I_{w\ha 1\ha 1}$. By induction, all gaps in $Z_t^*$ of order at most $k_n+m_n-1$ are disjoint and contained in $Z_t$, and every such a gap is contained in some interval $I_s$ for $s\in \{0,1\}^{k_n+m_n-1}$. Moreover, distances between them (as well as between endpoints of $Z_t$ and the nearest to them gaps) are equal to numbers from the sequence $(M^{n+1}_i).$

{\bf Case 2.} $\delta_k$ satisfies $(**)$.
We will show that gaps are placed like on Figure \ref{rys**} (for $m_n=3$) which presents an overlap $Z_t$ and the distance between gaps $G$ and $H$ is one of the numbers $M^{n+1}_i$ (the other intervals between gaps or between endpoints of $Z_t$ and the nearest gaps are some intervals $I_v$ for $v\in \{0,1\}^{k_n+m_n-1}$). The letters correspond to the following gaps: $A=G_{t\ha 1\ha 0^{(k_n-k)}}$, $B=G_{t\ha 1\ha 0^{(k_n-k-1)}}$,
$C=G_{t\ha 1\ha 0^{(k_n-k)}\ha 1}$, $D=G_{t\ha 1\ha 0^{(k_n-k-2)}}$, $E=G_{t\ha 0\ha 1^{(k_n-k-2)}}$, $F=G_{t\ha 0\ha 1^{(k_n-k)}\ha 0}$, $G=G_{t\ha 0\ha 1^{(k_n-k-1)}}$, $H=G_{t\ha 0\ha 1^{(k_n-k)}}$.

 \begin{figure} [h] 
\includegraphics[width=0.9\textwidth]{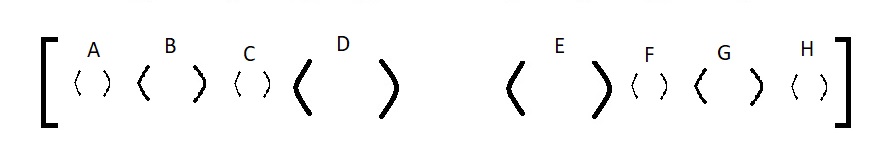}
\caption{}
\label{rys**}
\end{figure}

Analogously as in Case 1 we have 
$l(G_{t\ha 1 \ha 0^{(k_n-k-2)}}) > l(I_{t \ha 1}) = l(Z_t)$ and $r(G_{t\ha 0 \ha 1^{(k_n-k-2)}}) < r(I_{t \ha 0}) = r(Z_t)$.

We also have, by $(**)$,
$$l(G_{t\ha 0 \ha 1^{(k_n-k-2)}})-r(G_{t\ha 1 \ha 0^{(k_n-k-2)}}) = r(Z_t)-a_{k_n}-(l(Z_t)+a_{k_n}) = \delta_k -2a_{k_n}>0.$$
All other gaps in $Z_t^*$ of order at most $k_n+m_n-1$ are contained in $I_{t\ha 0 \ha 1^{(k_n-k-1)}}$ or $I_{t\ha 1 \ha 0^{(k_n-k-1)}}$, and, by the above calculations, these intervals are disjoint. Thus, all gaps in $Z_t^*$ of order at most $k_n+m_n-1$ are pairwise disjoint and contained in $Z_t$. Moreover, distance between gaps $G_{t\ha 1 \ha 0^{(k_n-k-2)}}$ and $G_{t\ha  0 \ha 1^{(k_n-k-2)}}$ is a number from the sequence $(M^{n+1}_i)$.

{\bf Case 3.}
$\delta_k$ satisfies $(*')$.
We will show that gaps are placed like on Figure \ref{rys*'} (for $m_n=4$) which presents an overlap $Z_t$ and distances between consecutive gaps (or endpoints of $Z_t$ and the nearest gaps) are equal to numbers from the sequence $(M^{n+1}_i)$. The letters correspond to the following gaps: $A=G_{t\ha 0\ha 1\ha 0\ha 0}$, $B=G_{t\ha 1\ha 0 \ha 0 \ha 0}$,
$C=G_{t\ha 0\ha 1\ha 0}$, $D=G_{t\ha 1\ha 0 \ha 0}$, $E=G_{t\ha 0\ha 1\ha 0\ha 1}$, $F=G_{t\ha 1\ha 0 \ha 0 \ha 1}$, $G=G_{t\ha 0\ha 1}$, $H=G_{t\ha 1\ha 0}$,
$J=G_{t\ha 0\ha 1\ha 1\ha 0}$, $K=G_{t\ha 1\ha 0\ha 1 \ha 0}$,
$L=G_{t\ha 0\ha 1\ha 1}$, $M=G_{t\ha 1\ha 0\ha 1}$,
$N=G_{t\ha 0\ha 1\ha 1\ha 1}$, $O=G_{t\ha 1\ha 0\ha 1 \ha 1}$.

 \begin{figure} [h] 
\includegraphics[width=0.9\textwidth]{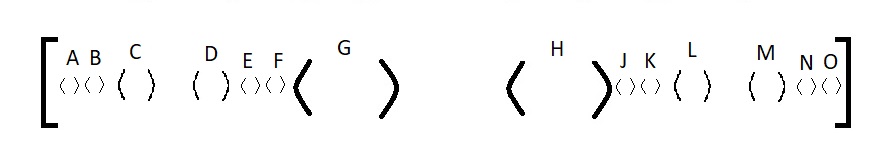}
\caption{}
\label{rys*'}
\end{figure}
We have
$G_{t\ha 1 \ha 0} \subset J_{t \ha 1}$, so $l(G_{t\ha 1 \ha 0}) > l(J_{t \ha 1}) = l(Z_t)$. Moreover, $$r(Z_t) - r(G_{t\ha 1 \ha 0}) = r(Z_t)-(l(Z_t) +a_{k_n+1})= |Z_t| - a_{k_n+1} = \delta_k - a_{k_n+1} > 0,$$ so
$G_{t\ha 1 \ha 0} \subset Z_t$. Similarly, $G_{t\ha 0 \ha 1} \subset Z_t$. By $(*')$, we also have
$$l(G_{t\ha 1 \ha 0}) - r(G_{t\ha 0 \ha 1}) = l(Z_t) + r_{k_n+1} - (r(Z_t)-r_{k_n+1}) =2r_{k_n+1} - |Z_t| = 2r_{k_n+1} - \delta_k >0.$$
So, $G_{t\ha 1 \ha 0} \cap G_{t\ha 0 \ha 1} = \emptyset$. In particular, $G_{t\ha 1 \ha 0} \subset I_{t \ha 0 \ha 1 \ha 1}$ and $G_{t\ha 0 \ha 1} \subset I_{t \ha 1 \ha 0\ha 0}.$

Assume that $G_{v}, G_{w} \in Z_t^*$, where $|v|=|w| = k_n+i$ for $0\leq i \leq m_n-3$ are such that $G_v<G_w$, there are no gaps from $Z_{t}^*$ of order at most $k_n+i+1$ in intervals $[l(I_w),l(G_v)], [r(G_v),l(G_w)], [r(G_w),r(I_v)]$, $l(G_v)-l(I_w)=r(I_v)-r(G_w) = \delta_{k} - \sum_{j=1}^{i+1}a_{k_{n}+j}$ and
$l(G_w)-r(G_v) = 2r_{k_n+i+1} - \delta_k+\sum_{l=1}^i a_{k_n+l}$.
 
Using $(*')$, we get
$$l(G_{v\ha 0})-l(I_w) = l(G_v)-a_{k_n+i+2}-l(I_w) = \delta_{k} - \sum_{j=1}^{i+1}a_{k_{n}+j} -a_{k_n+i+2} = \delta_{k} - \sum_{j=1}^{i+2}a_{k_{n}+j}\geq \delta_{k} - \sum_{j=1}^{m_n-1}a_{k_{n}+j} >0.$$
Analogously, 
$$l(G_v)-r(G_{w\ha 0}) = l(G_{v\ha 1}) - r(G_w) = r(I_v) - r(G_{w\ha 1}) = \delta_{k} - \sum_{j=1}^{i+2}a_{k_{n}+j} > 0.$$
Moreover, 
$$l(G_{w\ha 0})-r(G_{v\ha 0}) = l(I_w)+r_{k_n+i+2} - (l(G_v)-r_{k_n+i+2}) =  -\delta_{k} + \sum_{j=1}^{i+1}a_{k_{n}+j}+2r_{k_n+i+2}.$$

Similarly, $$l(G_{w\ha 1})-r(G_{v\ha 1}) =-\delta_{k} + \sum_{j=1}^{i+1}a_{k_{n}+j}+2r_{k_n+i+2}>0.$$ 

Finally, we get that $l(I_w)<G_{v\ha 0}<G_{w\ha 0} < G_v$ and $G_w<G_{v\ha 1}<G_{w\ha 1} < r(I_v)$ and there are no gaps between them in $Z_t^*$ of order at most $k_n+i+2$. In particular, $G_{v\ha 0} \subset I_{w\ha 0 \ha 0}$, $G_{w\ha 0} \subset I_{v\ha 0 \ha 1}$, $G_{v\ha 1} \subset I_{w\ha 1 \ha 0}$ and $G_{w\ha 1} \subset I_{v\ha 1 \ha 1}$. By induction, all gaps in $Z_t^*$ of order at most $k_n+m_n-1$ are pairwise disjoint and contained in $Z_t$ and every such a gap is contained in some interval $I_s$ for $s \in \{0,1\}^{k_n+m_n-1}$. Moreover, distances between them (as well as between endpoints of $Z_t$ and the nearest to them gaps) are equal to numbers from the sequence $(M^{n+1}_i).$

Now, assume that for some $j \in \N$ all gaps in $Z_t^*$ of order at most $k_j+m_j-1$ are pairwise disjoint and contained in $Z_t$ and distances between them (as well as between endpoints of $Z_t$ and the nearest to them gaps) are equal to numbers from the sequence $(M^{j+1}_i).$ Since all numbers $M_i^{j+1}$ satisfy either $(*)$ or $(**)$, then we can repeat the reasoning from Case 1 or Case 2 to show that all gaps in $Z_t^*$ of order at most $k_{j+1}+m_{j+1}-1$ are pairwise disjoint and contained in $Z_t$. By induction, all gaps in $Z_t^*$ are pairwise disjoint and contained in $Z_t$.

Now, we are going to inductively show that every gap $G$ either belongs to $\G$ or there is $t$ such that $G \in Z_t^*$. 

Let $s \in \{0,1\}^{k_1-1}$. If $s_i = 0$ or $s_i = 1$ for all $i$, then $G_s$ comes from $0$ or $1$, so $G_s \in \G$. If this is not the case, then there exists $N <k_1-1$ such that $s_N \neq s_{i}$ for $i > N$. Then $G_s$ comes from $l(I_{s|(N-1) \ha 1}) = l(Z_{s|(N-1)})$ or from $r(I_{s|(N-1) \ha 0}) = r(Z_{s|(N-1)})$, so $G_s \in Z_{s|(N-1)}^*$.

Now, if $m_1>1$, let $s\in \{0,1\}^{k_1+i}$ for $i\in \{0,\dots,m_1-2\}$. If $s_{k_1} =s_{k_1-1}$, then $G_s$ is a neighbour of $G_{s|k_1-1}$. We have already showed that $G_{s|k_1-1} \in \G$ or $G_{s|k_1-1} \in Z_t^*$ for some $t$. Hence if $G_{s|k_1-1} \in \G$, then also $G_s \in \G$ and if $G_{s|k_1-1} \in Z_t^*$, then also $G_{s} \in Z_t^*$. If $s_{k_1} \neq s_{k_1-1}$, then, by the definition of an overlap of the second type, we have $G_s \in Z_{s|(k_1-2)}^*$.

Now, assume that all gaps of order at most $k_n+m_n-1$ are either in $\G$ or in some $Z_t^*$. 
Let $s \in \{0,1\}^{k_{n+1}-1}$. If $s_i = 0$ or $s_i = 1$ for all $i$, then $G_s$ comes from $0$ or $1$, so $G_s \in \G$. If this is not the case, then there exists $N <k_{n+1}-1$ such that $s_N \neq S_{i}$ for $i > N$. If $N \notin K$, then $G_s$ comes from $l(Z_{s|(N-1)})$ or $r(Z_{s|(N-1)})$, so $G_s \in Z_{s|(N-1)}^*$. If $N \in K$, then $G_s$ is a gap coming from $G_{s|(N-1)}$ which is a gap of order at most $k_n+m_n-1$. If $G_{s|(N-1)} \in \G$, then also $G_s \in \G$ and if $G_{s|(N-1)} \in Z_t^*$ for some $t$, then also $G_s \in Z_t^*$. 

Now, if $m_{n+1}>1$, let $s\in \{0,1\}^{k_{n+1}+i}$ for $i\in \{0,\dots,m_{n+1}-2\}$. If $s_{k_{n+1}} =s_{k_{n+1}-1}$, then $G_s$ is a neighbour of $G_{s|k_{n+1}-1}$. We have already showed that $G_{s|k_{n+1}-1} \in \G$ or $G_{s|k_{n+1}-1} \in Z_t^*$ for some $t$. Hence if $G_{s|k_{n+1}-1} \in \G$, then also $G_s \in \G$ and if $G_{s|k_{n+1}-1} \in Z_t^*$, then also $G_{s} \in Z_t^*$. If $s_{k_{n+1}} \neq s_{k_{n+1}-1}$, then, by the definition of an overlap of the second type, we have $G_s \in Z_{s|(k_{n+1}-2)}^*$.

By induction, we have that every gap $G_s$, where $|s|+1\in K$, belongs either to $\G$ or to $Z_t^*$ for some $t$.

We will now inductively show that for every $x \in I\setminus \bigcup \G$ and every $n \in \N$ there exists $t\in \{0,1\}^{k_n+m_n-1}$ such that $x\in I_t.$
Let $x \in I\setminus \bigcup \G$. Assume that $x \notin I_p$ for any $p \in\{0,1\}^{k_1+m_1-1}$. Then $x \in G_s$ for some $s\in\{0,1\}^{<k_1+m_1-1}$. Since $x \notin \bigcup \G$, we have $G_s \notin \G$, and so, $G_s \in Z_t^*$ for some $t$. By the assumption, $\delta_{|t|+1}$ satisfies $(*)$, $(**)$ or $(*')$. If $\delta_{|t|+1}$ satisfies $(*)$ or $(*')$, then we showed that $G_s \subset I_w$ for some $w \in \{0,1\}^{k_1+m_1-1}$, a contradiction. Hence assume that $\delta_{|t|+1}$ satisfies $(**)$. Since $G_s \subset Z_t$, we have $G_s \subset I_{t\ha (1-j)}$, where $j = s_{|t|+1}$. Since $x \notin I_p$ for any $p \in\{0,1\}^{k_1+m_1-1}$, $x \in G_w$, where $ t\ha (1-j) \prec w$. We know that $G_s$ is disjoint with $G_{t\ha (1-j)\ha j^{(k_1-2-|t|)}}$. Without loss of generality suppose that $j=1$. We have $r(G_{t\ha 0^{(k_1-1-|t|)}})=l(I_t)+a_{k_1} <l(I_t)+a_{|t|+1}\leq l(Z_t),$ so $G_{t\ha 0^{(k_1-1-|t|)}} \cap G_s = \emptyset$. Since we cannot have $G_w \in \G$, there is $v$ such that $G_w \in Z_v^*$. We also have $G_w \neq G_{t\ha 0^{(k_1-1-|t|)}}$ and $G_w \neq G_{t\ha 0 \ha 1^{(k_1-2-|t|)}}$ (and it is also disjoint with neighbours of these gaps). Thus, $t \ha 0 \prec v$, and so $|v| > |t|$. 
If $|Z_v|$ satisfies $(*)$ or $(*')$, then we obtain a contradiction, similarly as earlier. Otherwise, we repeat the reasoning as above to show that $x$ is in a gap which belongs to $Z_u^*$ with $|u| > |v|$. Repeating this reasoning finitely many times we will obtain that $x$ is in a gap which belongs to $Z_q^*$ which satisfies $(*)$ or $(*')$ (because eventually $|q| = k_1-2$, and so $|Z_q| = \delta_{k_1-1}$ which satisfies $(*')$). Thus, $x \in I_p$ for some $p \in\{0,1\}^{k_1+m_1-1}$.

Now, assume that for some $n \in \N$ and some $p \in \{0,1\}^{k_n+m_n-1}$ we have $x \in I_p$. Suppose that $x \notin I_q$ for any $q \in \{0,1\}^{k_{n+1}+m_{n+1}-1}$. Then $x \in G_s$ for some $s \in\{0,1\}^{<k_{n+1}+m_{n+1}-1}$ with $p \prec s$. 
If $G_s$ is not a gap coming from $l(I_p)$ or $r(I_p)$, or one of their neighbours, then there exists $w$ with $|w| \geq k_{n}+m_n-1$ such that $G_s \in Z_w^*$. Repeating the reasoning as above, we obtain that $x \in I_q$ for some $q \in \{0,1\}^{k_{n+1}+m_{n+1}-1}$.
Now, without loss of generality assume that $G_s$ is the gap coming from $l(I_p)$ (or neighbour of this gap). Since $G_s \notin \G$, we have $l(I_p) \neq 0$. Thus, $N=\max\{i\colon p_i =1\}>0$. If $N \in K$, then $l(I_p)$ is an endpoint of some gap $G_w$ belonging to some $Z_v^*$. If $N \notin K$, then $l(I_p) = l(Z_v)$, where $v = p|(N-1)$. In both cases $G_s \in Z_v^*$. Since all gaps in $Z_v^*$ are disjoint, we have that $G_s$ lies in one of the intervals $J$ between gaps from $Z_v^*$ of order at most $k_n+m_n-1$ (or between endpoint of $Z_v$ and a gap in $Z_v^*$). We have already showed that either $|J|= M^{n+1}_i$ (in Case 1, Case 3 and sometimes in Case 2) or $J = I_p$ (in Case 2 if the gaps between which lies $J$ are neighbours of the same gap). If $|J|= M^{n+1}_i$, then this number satisfies $(*)$ or $(**)$ and, reasoning similarly as earlier, we obtain that $x \in I_q$ for some $q \in \{0,1\}^{k_{n+1}+m_{n+1}-1}$. 
If $J=I_p$, the interval (overlap) in which $G_w$ was contained had length satisfying $(**)$ (or $|Z_v^*|$ satisfied $(**)$). But again, repeating the reasoning from above, we obtain that $x \in I_q$ for some $q \in \{0,1\}^{k_{n+1}+m_{n+1}-1}$.

Thus, for any $n \in \N$ we have $I \setminus \bigcup\G \subset \bigcup_{t\in\{0,1\}^{k_n+m_n-1}}$, and so
$I \setminus \bigcup\G \subset E(a_n)$. Therefore, $E(a_n)$ is a Cantorval. 

%

\end{proof}
\section{Examples and applications}

In this section we would like to show how we can use Theorem \ref{main} to produce new Cantorvals.
One class of sequences which is specially interesting in this context is the class of multigeometric sequences.
For any $m\in \N$, $q\in (0,1)$, $k_1,k_2, \dots,k_m \in \R$. The sequence $(k_1q, k_2q,\dots,k_mq,k_1q^2,k_2q^2,\dots, k_mq^2,\dots)$ is called multigeometric and is denoted by $(k_1,k_2,\dots,k_m;q)$.
Let us begin with the following example.

\begin{example}
Consider the set $E(1,2\sqrt{2}-2;q),$ where $q=\frac{2-\sqrt{2}}{2}$. We will show that this set is a Cantorval. 
First, observe that $k_n = 2n$. Indeed, 
$$r_{2}-{a_2} = q((2\sqrt{2}-1)\cdot\frac{q}{1-q} -2\sqrt{2}+2) \approx -0,07q <0,$$ 
so $r_2 <a_2$.
Moreover, $r_{2n} = r_2\cdot q^{n-1}$ and $a_{2n} = a_2 \cdot q^{n-1}$, so $r_{2n}<a_{2n}$ for $n\in \N$. On the other hand,
$r_1-a_1 = q((2\sqrt{2}-1)\cdot\frac{q}{1-q} +2\sqrt{2}-2-1) \approx
 0,59q>0$, so $r_1 > a_1$. Moreover, $r_{2n-1}=r_1\cdot q^{n-1}$ and $a_{2n-1}=a_1\cdot q^{n-1}$, so $r_{2n-1}> a_{2n-1}$ for $n\in \N$.

We will show that the assumptions of Theorem \ref{main} are satisfied.
Let $n \in \N$. If $n=1$, then there is no $i \notin K$, $i<k_1-1=1$, so there is no $M^1_1$ and $\delta_{1} > 0$, so $(*')$ is satisfied.
If $n > 1$, then 
$$\delta_{2n-3}-2a_{2n}= r_{2n-3}-a_{2n-3}-2a_{2n} = r_1\cdot q^{n-2} - a_1q^{n-2}-2a_2q\cdot q^{n-2} = q^{n-2} \cdot (r_1-a_1-2a_2q) \approx 0,1q^{n-1} >0,$$
so it satisfies $(**)$.

Now, if $n > 2$, then for
$M= \delta_{2n-5}-2a_{2n-2}$ 
we have $$M-a_{2n} = q^{n-3} (r_1-a_1-2a_2q-a_2q^2) \approx 0,03q^{n-2} > 0.$$
After calculations we also get
$$2r_{2n}-M = M-a_{2n} > 0.$$
So, the condition $(*)$ is satisfied. If $n >3$, then
after easy but tedious calculations it occurs that $\delta_{2n-7}-2a_{2n-4} -a_{2n-2} = 2r_{2n-2} - (\delta_{2n-7}-2a_{2n-4}) = \delta_{2n-5}-2a_{2n-2}=M$, so it also satisfies $(*)$. As the consequence we get that all remaining numbers $M^n_i$ are equal to $M$, so condition $(*)$ holds for all of them. Therefore, the assumptions of Theorem \ref{main} are satisfied, so $E(a_n)$ is a Cantorval.

\end{example}
It is worth noting that this is probably the first example of multigeometric sequence with non-commensurable coefficients which achievement set is a Cantorval.

As it was mentioned earlier, in the paper \cite{MM} the authors proved that for any infinite $L\subset \N$ there exists a nonincreasing sequence $(a_n)$ such that $E(a_n)$ is a Cantor set with $K=L$.
They asked if the same can be proved for Cantorvals.
Now, we answer this question.

\begin{theorem}\label{mami}

Let $(k_n)$, $(m_n)$ be sequences of natural numbers such that $k_1>1$ and $k_{n+1} > k_n+m_n$. Put $k_0:=0$, $m_0:=1,$ $K_n := \{k_n,\dots,k_n+m_n-1\}$, $K=\bigcup_{n=1}^\infty K_n$.
Let $a_1 > 0$ and for $i > 1$ 
$$a_i := \left\{ \begin{array}{ccc}
\frac12 a_{i-1} \;\text{ if }\; i-1,i,i+1 \notin K \text{ or }  i-1,i \in K\\ 
\frac{2^{m_n}+1}{2^{m_n+1}} a_{i-1} \;\text{ if }\; i=k_n-1> k_{n-1}+m_{n-1}\\ 
\frac{2^{m_n}+1}{2^{m_n+2}} a_{i-1} \;\text{ if }\;  i=k_n-1=k_{n-1}+m_{n-1}\\
\frac{2^{m_n}}{2^{m_n}+1} a_{i-1} \;\text{ if }\;i=k_n \\ 
\frac14 a_{i-1} \;\text{ if }\; i,i+1 \notin K, i-1\in K.
\end{array}%
\right. $$
Then the sequence $(a_n)$ is such that $a_n > r_n$ if and only if $n \in K$ and $E(a_n)$ is a Cantorval. 
\end{theorem}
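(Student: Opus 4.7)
The plan is to handle (1) well-definedness of the sequence, (2) the sign test $a_n > r_n \iff n \in K$, and (3) application of Theorem~\ref{main} via the Star Procedure. The first step is a routine case analysis: one checks that the five clauses in the recursive definition of $a_i$ are pairwise disjoint and exhaust every $i>1$ by splitting on the membership of $i-1,i,i+1$ in $K$, together with the subcase $k_n-1=k_{n-1}+m_{n-1}$ of adjacent blocks. Every ratio lies in $(0,1)$, so $(a_n)$ is automatically positive and strictly decreasing.

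For the sign test, I would exploit the identity $a_{k_n+j}=2^{-j}a_{k_n}$ for $0\le j\le m_n-1$ (the within-block case 1 ratios), which combined with $r_{n-1}=a_n+r_n$ gives $r_{k_n+j}-a_{k_n+j}=r_{N_n}-a_{N_n}$, where $N_n:=k_n+m_n-1$. Thus $a_n>r_n$ throughout $K_n$ reduces to a single check at $N_n$, and this can be propagated by an induction on $n$ running backward through the tail, using that $a_{N_n+1}$ is either $\frac14 a_{N_n}$ (case 5) or $\frac{2^{m_{n+1}}+1}{2^{m_{n+1}+2}}a_{N_n}$ (case 3), both close to $a_{N_n}/4$. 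At the pre-block index $k_n-1$, the identity $\delta_{k_n-1}=r_{k_n}-2^{-m_n}a_{k_n}$ (derived from $a_{k_n-1}=\frac{2^{m_n}+1}{2^{m_n}}a_{k_n}$) together with the trivial lower bound $r_{k_n}\ge a_{N_n}=2^{-(m_n-1)}a_{k_n}$ yields $\delta_{k_n-1}>0$. At interior gap indices (case 1 ratio $1/2$), the map $\rho\mapsto(1+\rho)/2$ acting on $\rho_n:=r_n/a_n$ fixes $\rho=1$, so once $\delta_{k_n-1}>0$ is in hand, positivity propagates backward to every non-$K$ index in the gap.

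The application of Theorem~\ref{main} is the main obstacle. I would proceed by induction on $n$, verifying that every element of the list $(M^n_i)$ produced by the Star Procedure is positive and satisfies $(*)$ or $(**)$, and that $\delta_{k_n-1}$ satisfies $(*')$. The strategy is to identify a small finite \emph{alphabet} of simple positive multiples of $a_{k_n-1}$ into which every $M^n_i$ must fall, the coefficients being rational functions of $2^{m_n}$. The tuning constants $\frac{2^{m_n}+1}{2^{m_n+1}}$, $\frac{2^{m_n}+1}{2^{m_n+2}}$, $\frac{2^{m_n}}{2^{m_n}+1}$ are engineered precisely so that $\delta_{k_n-1}-\sum_{j=1}^{m_n-1}a_{k_n+j}$ is of order $a_{k_n}/2^{m_n}$ (hence positive) and each $2r_{k_n+j}-\delta_{k_n-1}+\sum_{i=1}^{j-1}a_{k_n+i}$ is bounded below by a positive multiple of $a_{N_n}$, which is exactly what $(*')$ asks; and so that each transformation of a surviving $M^n_i$ under $(*)$ (spawning $M^n_i-a_{k_n}$, $r_{k_n}+r_{N_n}-M^n_i$, or $M^n_i-2a_{k_n+j}-\sum_{l=1}^{j-1}a_{k_n+l}$) or $(**)$ (spawning $M^n_i-2a_{k_n}$) remains inside the same alphabet at stage $n+1$. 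Pinning down this invariant alphabet and verifying each required inequality on all its members is the technical heart of the proof; once this is done, each inequality reduces to an elementary identity among powers of $2$ and $2^{m_n}+1$, and Theorem~\ref{main} delivers the conclusion that $E(a_n)$ is a Cantorval.
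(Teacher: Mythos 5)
Your overall architecture matches the paper's (reduce the sign test to one check per block via the invariance of $\delta_i$ under the ratio-$\tfrac12$ steps, then feed the sequence into Theorem \ref{main} by showing the Star Procedure never breaks), but the argument has a genuine gap at its declared ``technical heart.'' You posit that the numbers $M^n_i$ stay inside some finite invariant alphabet of multiples of $a_{k_n-1}$, yet you neither exhibit the alphabet nor verify any of the inequalities on it --- and that identification is precisely the non-routine content of the theorem. The paper proves something much sharper than an alphabet: at every stage \emph{all} the numbers $M^n_i$ are equal to one another, and \emph{every} left-hand side of $(*)$ and $(*')$ collapses to the single value
$$r_{k_n+m_n-1}-\tfrac12 a_{k_n+m_n-1}\;=\;\delta_{k_n-1}-\sum_{i=1}^{m_n-1}a_{k_n+i},$$
which is then shown positive by a two-case analysis on whether $k_{n+1}=k_n+m_n+1$ or $k_{n+1}\ge k_n+m_n+2$. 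In particular only condition $(*)$ is ever invoked for the $M^n_i$ (condition $(**)$ never occurs, contrary to what your ``transformation'' bookkeeping anticipates), and the adjacent-block case $k_n+m_n=k_{n+1}-1$ is reconciled with the generic case by checking that $M^{n+1}_i$, rewritten in terms of $a_i$ for $i\ge k_{n+1}-1$, is the same expression $r_{k_{n+1}-2}-\frac{2^{m_{n+1}+1}}{2^{m_{n+1}}+1}a_{k_{n+1}-1}$ in both cases --- a step your sketch does not anticipate at all. Without the collapse-to-one-value computation (the paper's chains of identities showing $\delta_{k_1-2}-a_{k_1}$, $r_{k_1}+r_{k_1+m_1-1}-\delta_{k_1-2}$, $\delta_{k_1-2}-2a_{k_1+j}-\sum_{l<j}a_{k_1+l}$ and $2r_{k_1+j}-\delta_{k_1-1}+\sum_{i<j}a_{k_1+i}$ all equal the displayed quantity), your induction cannot close.

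Two smaller points would also fail as written. First, your ``trivial lower bound'' $r_{k_n}\ge a_{N_n}=2^{-(m_n-1)}a_{k_n}$ is false when $m_n=1$: there it reads $r_{k_n}\ge a_{k_n}$, which contradicts $k_n\in K$; so your derivation of $\delta_{k_n-1}>0$ has a hole exactly in the simplest blocks, and in any case mere positivity of $\delta_{k_n-1}$ is insufficient --- $(*')$ demands $\delta_{k_n-1}-\sum_{j=1}^{m_n-1}a_{k_n+j}>0$ together with the second family of inequalities. Second, your ``induction on $n$ running backward through the tail'' for $r_{N_n}<a_{N_n}$ is ill-founded (there is no base case at infinity); the correct route, taken by the paper, is a direct domination of the tail by the geometric series $\sum_i 2^{-i}a_{N_n}$, and the point needing care there is that the single ratio $\frac{2^{m}}{2^{m}+1}$ at $k_{n+1}$ exceeds $\tfrac12$ and must be paired with the preceding ratio via $\frac{2^{m}+1}{2^{m+1}}\cdot\frac{2^{m}}{2^{m}+1}=\tfrac12$ --- a cancellation your sketch does not mention.
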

\begin{proof}
Let $n\in \N$ and $j \in K_n$. We will show that $r_j<a_j$. 
Observe that if $a_i,a_{i+1} \in K_n$ and $r_{i+1} < a_{i+1},$ then also $r_i < a_i$. Indeed, we have $a_{i+1} =\frac12a_i$, so $r_i = r_{i+1}+a_{i+1}<a_{i+1}+a_{i+1}=2a_{i+1} = a_i.$ Thus, it suffices to show that $a_{j} > r_{j}$ for $j=k_n+m_n-1$.

If $k_n+m_n < k_{n+1}-1$, then $$a_{k_n+m_n} = \frac14 a_{k_n+m_n-1},$$ $$a_{k_{n+1}-1} = \frac{2^{m_{n+1}}+1}{2^{m_{n+1}+1}}a_{k_{n+1}-2} \leq \frac34 \cdot \frac14 a_{k_n+m_n-1} < \frac12 a_{k_n+m_n-1},$$ $$a_{k_{n+1}} = \frac{2^{m_{n+1}}}{2^{m_{n+1}}+1}a_{k_{n+1}-1} = \frac12a_{k_{n+1}-2}$$ and $a_i = \frac12 a_{i-1}$ for the remaining $i \in \{k_n+m_n,\dots,k_{n+1}+m_{n+1}-1\}$. 
If $k_n+m_n = k_{n+1}-1$, then $$a_{k_n+m_n} = \frac{2^{m_{n+1}}+1}{2^{m_{n+1}+2}}a_{k_n+m_n-1} < \frac12 a_{k_n+m_n-1},$$ $$a_{k_{n+1}} = \frac{2^{m_{n+1}}}{2^{m_{n+1}}+1}a_{k_{n+1}-1}=\frac14a_{k_{n+1}-2}=\frac14a_{k_{n}+m_n-1}$$ and 
$a_i = \frac12 a_{i-1}$ for the remaining $i \in \{k_n+m_n,\dots,k_{n+1}+m_{n+1}-1\}$. 

Therefore,
$$r_{k_n+m_n-1} = \sum_{i=k_{n}+m_n}^\infty a_i < \sum_{i=1}^\infty \frac1{2^i} a_{k_n+m_n-1} = a_{k_n+m_n-1}.$$

Now we will show that for every $n \in \N$, $\delta_{k_n-1}-\sum_{i=1}^{m_n-1} a_{k_n+i} >0$, and so $\delta_{k_n-1} >0$ which is equivalent to $r_{k_n-1} > a_{k_n-1}$. Later we will show that all numbers $M^{n+1}_i$ are equal to $\delta_{k_n-1}-\sum_{i=1}^{m_n-1} a_{k_n+i}$, and so are positive (and so are $\delta_i$ for $i \notin K$, which will prove that $a_i > r_i$ if and only if $i \in K$). 

We have $a_{k_n}=\frac{2^{m_n}}{2^{m_n}+1} a_{k_n-1}$ and
\begin{equation}\label{kn-1}
\begin{aligned}
\delta_{k_n-1}- \sum_{i=1}^{m_n-1} a_{k_n+i} &=r_{k_n-1}-a_{k_n-1}-\sum_{i=1}^{m_n-1} a_{k_n+i}  = a_{k_n}-a_{k_n-1}+ r_{k_n+m_n-1} \\&= a_{k_n} - \frac{2^{m_n}+1}{2^{m_n}}a_{k_n}+r_{k_n+m_n-1} = -\frac1{2^{m_n}}a_{k_n} + r_{k_n+m_n-1} = -\frac12a_{k_n+m_n-1}+r_{k_n+m_n-1}.
\end{aligned}
\end{equation}


Consider the cases.

1. $k_n+m_n+1 = k_{n+1}$
Then we have $a_{k_n+m_n} = \frac{2^{m_n}+1}{2^{m_n+2}} a_{k_n+m_n-1}$ and $a_{k_n+m_n+1} = \frac{2^{m_n}}{2^{m_n}+1} a_{k_n+m_n},$ so 
$$-\frac12a_{k_n+m_n-1}+r_{k_n+m_n-1} > -\frac12\cdot \frac{2^{m_n+2}}{2^{m_n}+1} a_{k_n+m_n}+a_{k_n+m_n}+a_{k_n+m_n+1}$$$$=-\frac{2^{m_n+1}}{2^{m_n}+1} a_{k_n+m_n}+a_{k_n+m_n}+\frac{2^{m_n}}{2^{m_n}+1} a_{k_n+m_n}=\frac{a_{k_n+m_n}}{2^{m_n+1}}(-2^{m_n+1}+2^{m_n}+2^{m_n}+1)=\frac{a_{k_n+m_n}}{2^{m_n+1}}>0.$$
 
2. $k_{n+1} = k_{n}+m_n + j$, where $j\geq 2$.
Then we have $a_{k_n+m_n} = \frac14 a_{k_n+m_n-1}$, $a_{k_n+m_n+i} = \frac12 a_{k_n+m_n+i-1}$ for $1\leq i\leq j-2$, $$a_{k_n+m_n+j-1} = \frac{2^{m_n}+1}{2^{m_n+1}}a_{k_n+m_n+j-2}> \frac12 a_{k_n+m_n+j-2},$$ $$a_{k_n+m_n+j} =\frac{2^{m_n}}{2^{m_n}+1}a_{k_n+m_n+j-1}=\frac12 a_{k_n+m_n+j-2},$$ so
$$-\frac12a_{k_n+m_n-1}+r_{k_n+m_n-1} > -2a_{k_n+m_n} +a_{k_n+m_n} + a_{k_n+m_n+1} + \dots +a_{k_n+m_n+j}$$$$ > a_{k_n+m_n}(-1 +\sum_{i=1}^{j-2} \frac1{2^i} + \frac1{2^{j-1}}+\frac1{2^{j-1}}) = 0.$$

We will now show that for any $n \in \N$ numbers $M^n_i$ are equal for all $i$ (and for $n>1$ are equal to $ \delta_{k_{n-1}-1}-\sum_{i=1}^{m_{n-1}-1} a_{k_n+i}$) and satisfy condition $(*)$ and $\delta_{k_n-1}$ satisfies $(*')$. In particular, $\delta_i>0$ for $i \notin K$, which will prove that $a_i > r_i$ if and only if $i \in K$.

If $k_1=2$, then there is no number $M^1_i$ defined. If $k_1=3$, then we have only one such number. If $k_1\geq 4$, then $M^1_i = \delta_i$ for $i < k_1-1$. For $i\in \{1,2,\dots,k_1-3\}$ we have $a_{i+1} = \frac{1}{2}a_i,$ and so 
\begin{equation} \label{delta}
\delta_{i+1} = r_{i+1}-a_{i+1} = r_i-2a_{i+1}=r_i - 2\cdot \frac{1}{2}a_i = r_i - a_i = \delta_i.
\end{equation}
We have $a_{k_1-1} = \frac{2^{m_1}+1}{2^{m_1+1}} a_{k_1-2}$, $a_{k_1} = \frac{2^{m_1}}{2^{m_1}+1} a_{k_1-1}$ and $a_{k_1+i}=\frac12 a_{k_1+i-1}$ for $1\leq 1 <k_1+m_1$, and so $a_{k_1-2}=2^{m_1}a_{k_1+m_1-1}$. Therefore, by (\ref{kn-1}),
\begin{equation}\label{delta-a}
\begin{aligned}
\delta_{k_1-2} - a_{k_1} &= r_{k_1-2}-a_{k_1-2}-a_{k_1} = -a_{k_1-2}-a_{k_1}+r_{k_1+m_1-1}+a_{k_1-1}+a_{k_1} +\dots + a_{k_1+m_1-1}\\&=r_{k_1+m_1-1}+a_{k_1+m_1-1} (1+2+\dots+2^{m_1-2}+2^{m_1-1}\cdot\frac{2^{m_1}+1}{2^{m_1}} -2^{m_1})\\&= r_{k_1+m_1-1}-\frac12 a_{k_1+m_1-1} = \delta_{k_{1}-1}-\sum_{i=1}^{m_{1}-1} a_{k_n+i} > 0,
\end{aligned}
\end{equation} 
\begin{equation}\label{2r-delta}
\begin{aligned}
r_{k_1}+r_{k_1+m_1-1} - \delta_{k_1-2}&=r_{k_1}+r_{k_1+m_1-1}-r_{k_1-2}+a_{k_1-2}=r_{k_1+m_1-1}-a_{k_1-1} -a_{k_1}+a_{k_1-2} \\&=r_{k_1+m_1-1}-\frac{2^{m_1}+1}{2^{m_1}}a_{k_1} -a_{k_1}+2a_{k_1}=r_{k_1+m_1-1} - \frac12a_{k_1+m_1-1} \\&=\delta_{k_{1}-1}-\sum_{i=1}^{m_{1}-1} a_{k_n+i} >0
\end{aligned}
\end{equation}
and for $j \in \{1, \dots, m_1-1\}$ (if $m_1>1$) (by (\ref{delta-a}))
\begin{equation}\label{M-2-a}
\begin{aligned}
\delta_{k_1-2} - 2a_{k_1+j}-\sum_{l=1}^{j-1}a_{k_1+l} &= r_{k_1-2} - a_{k_1-2} -a_{k_1+j-1} -\sum_{l=1}^{j-1}a_{k_1+l} =r_{k_1-2} - a_{k_1-2} - \sum_{l=1}^{j-1}a_{k_1}\frac1{2^l} - a_{k_1}\frac1{2^{j-1}}\\&=r_{k_1-2} - a_{k_1-2}-a_{k_1} =        \delta_{k_{1}-1}-\sum_{i=1}^{m_{1}-1} a_{k_1+i} >0.
\end{aligned}
\end{equation}
Thus, $\delta_i$ are equal to $\delta_{k_{1-1}-1}-\sum_{i=1}^{m_{n-1}-1} a_{k_n+i}$ and satisfy $(*)$ for $i < k_1-1$.

We will now show that $\delta_{k_1-1}$ satisfy $(*')$.
Let $j \in \{1,\dots, m_1-1\}$. 
We have, by (\ref{delta-a}),
\begin{equation}\label{2r*'}
\begin{aligned}
2r_{k_1+j}-\delta_{k_1-1} + \sum_{i=1}^{j-1}a_{k_1+i} &= 2r_{k_1+j}-r_{k_1-1}+a_{k_1-1} + \sum_{i=1}^{j-1}a_{k_1+i} = 2r_{k_1+j}-r_{k_1+j-1}-a_{k_1}+a_{k_1-1}\\&=r_{k_1+j}-a_{k_1+j}-a_{k_1}+a_{k_1-1}\\&=r_{k_1-2}-a_{k_1}-a_{k_1+j}-a_{k_1}-a_{k_1+1}-\dots-a_{k_1+j} \\&=r_{k_1-2}-a_{k_1} - a_{k_1-2} (\frac12 + \frac14 + \dots + \frac1{2^{j-2}}+\frac{2}{2^{j-1}})\\& = r_{k_1-2}-a_{k_1-2}-a_{k_1} = \delta_{k_1-1}-\sum_{i=1}^{m_1-1}a_{k_1+i} > 0,
\end{aligned}
\end{equation}
and hence $\delta_{k_1-1}$ satisfy $(*')$.

Let $n\in \N$. Assume that numbers $M^{n}_i$ are equal to some $M >0$ for all $i$, $M$ satisfies $(*)$, $\delta_{k_n-1}$ satisfies $(*')$ and all left-hand sides of inequalities $(*)$ and $(*')$ are equal to $-\frac12a_{k_n+m_n-1}+r_{k_n+m_n-1}$. We will show that all numbers $M^{n+1}_i$ are equal and satisfy $(*)$, $\delta_{k_{n+1}-1}$ satisfies $(*')$ and all left-hand sides of inequalities $(*)$ and $(*')$ are equal to $\delta_{k_{n+1}-1}-\sum_{j=1}^{m_{n+1}-1}a_{k_n+j} = -\frac12a_{k_{n+1}+m_{n+1}-1}+r_{k_{n+1}+m_{n+1}-1}.$

Consider the cases.

1. $k_n+m_n < k_{n+1}-1$.
Then $a_{k_n+m_n} = \frac14 a_{k_n+m_n-1}$.
So, 
$$\delta_{k_n+m_n} = r_{k_n+m_n} - a_{k_n+m_n} = r_{k_n+m_n-1} - a_{k_n+m_n} - a_{k_n+m_n} = r_{k_n+m_n-1} - \frac12 a_{k_n+m_n-1}.$$
Repeating the reasoning from $(\delta)$, we obtain that all numbers $\delta_i$ for $k_n+m_n \leq i <k_{n+1}-1$ are equal, and so all numbers $M^{n+1}_i$ are equal. Hence we can repeat the reasoning from the proof for $n=1$ to show that they satisfy $(*)$, $\delta_{k_{n+1}-1}$ satisfies $(*')$ and all left-hand sides of inequalities $(*)$ and $(*')$ are equal to $\delta_{k_{n+1}-1}-\sum_{j=1}^{m_{n+1}-1}a_{k_n+j} = -\frac12a_{k_{n+1}+m_{n+1}-1}+r_{k_{n+1}+m_{n+1}-1}.$

2. $k_n+m_n = k_{n+1}-1$.
Then the only numbers $M^{n+1}_i$ come from the inequalities $(*)$ and $(*')$, and by the assumption, they are equal to $r_{k_n+m_n-1} - \frac12 a_{k_n+m_n-1}$. 
We already know that if $k_n+m_n < k_{n+1}-1$, then all differences between left-hand sides of conditions $(*)$ or $(*')$ and $\delta_{k_{n+1}-1}-\sum_{j=1}^{m_{n+1}-1}a_{k_n+j}$ are equal to $0$, but these differences depend only on numbers $M^{n+1}_i$, $\delta_{k_{n+1}-1}$ and $a_i$ for $i\geq k_{n+1}-1$. However, the definition of $a_i$ for $i \geq k_{n+1}$ are the same with respect to $a_{k_{n+1}-1}$ regardless if $k_n+m_n = k_{n+1}-1$ or  $k_n+m_n < k_{n+1}-1$. Of course, also $\delta_{k_{n+1}-1}$ depends only on $a_i$ for $i \geq k_{n+1}-1$. Hence to show that also if $k_n+m_n = k_{n+1}-1$, then all differences between left-hand sides of conditions $(*)$ or $(*')$ and $\delta_{k_{n+1}-1}-\sum_{j=1}^{m_{n+1}-1}a_{k_n+j}$ are equal to $0$ it suffices to show that $M^{n+1}_i$ expressed in terms of $a_i$ for $i \geq k_n+m_n-1$ is the same regardless $k_n+m_n = k_{n+1}-1$ or $k_n+m_n < k_{n+1}-1$. If
$k_n+m_n=k_{n+1}-1$, then $a_{k_{n+1}-1} = \frac{2^{m_{n+1}}+1}{2^{m_{n+1}+2}} a_{k_{n+1}-2}$, and so
$$M^{n+1}_i  =r_{k_n+m_n-1}-\frac12 a_{k_n+m_n-1} = r_{k_{n+1}-2}-\frac12 a_{k_{n+1}-2} = r_{k_{n+1}-2} - \frac12 \cdot \frac{2^{m_{n+1}+2}}{2^{m_{n+1}}+1}a_{k_{n+1}-1} $$$$= r_{k_{n+1}-2} - \frac{2^{m_{n+1}+1}}{2^{m_{n+1}}+1}a_{k_{n+1}-1}.$$
If
$k_n+m_n<k_{n+1}-1$, then $a_{k_{n+1}-1} = \frac{2^{m_{n+1}}+1}{2^{m_{n+1}+1}} a_{k_{n+1}-2}$, and so
$$M^{n+1}_i  =\delta_{k_{n+1}-2}= r_{k_{n+1}-2}-a_{k_{n+1}-2} = r_{k_{n+1}-2} - \frac{2^{m_{n+1}+1}}{2^{m_{n+1}}+1}a_{k_{n+1}-1}.$$
Hence $M^{n+1}_i$ are the same (written using $a_i$ for $i \geq k_{n+1}-1$) in both cases. Therefore, by case 1, we obtain that numbers $M^{n+1}_i$ satisfy $(*)$, $\delta_{k_{n+1}-1}$ satisfies $(*')$ and all left-hand sides of inequalities $(*)$ and $(*')$ are equal to $\delta_{k_{n+1}-1}-\sum_{j=1}^{m_{n+1}-1}a_{k_n+j} = -\frac12a_{k_{n+1}+m_{n+1}-1}+r_{k_{n+1}+m_{n+1}-1}.$

By induction, the assumptions of Theorem \ref{main} are satisfied, so $E(a_n)$ is a Cantorval.
Moreover, $a_n >r_n$ if and only if $n\in K$.

\end{proof}
\begin{example}
Let $k_n=2n$ and $m_n=1$ for all $n \in \N$. We will use Theorem \ref{mami}. For $n\in \N$ we have  $a_{2n}=\frac23 a_{2n-1}$ and $a_{2n+1} = \frac38 a_{2n}.$ In particular, if we put $a_1 = \frac34$, we get $a_2 = \frac12$, $a_{2n} = a_2 \cdot (\frac 23\cdot \frac38)^{n-1} = \frac12 \cdot \frac1{4^{n-1}}$ and $a_{2n+1} = a_1 \cdot (\frac 23\cdot \frac38)^{n} = \frac34 \cdot \frac1{4^{n}}$, and thus $E(a_n)$ is the well-known Guthrie-Nymann Cantorval $E(3,2;\frac14)$.

\end{example}

In the papers \cite{BFS}, \cite{BBFS} the authors considered some special multigeometric sequences $x=(x_1,x_2,\dots,x_m;q)$. They proved that for some $q_1<q_2<q_3$ we have that if $q\geq q_3$, then $E(x)$ is an interval, if $q\in [q_2,q_3)$, then $E(x)$ is a Cantorval and if $q<q_1$, then $E(x)$ is a Cantor set. Moreover, under some additional assumption if $q=q_1$, then $E(x)$ is also a Cantorval. It was also proved that almost all $q \in (q_1,q_2)$ (in the sense of measure) $E(x)$ has a positive Lebesgue measure, but also there is a decreasing sequence $p_n$ in this intervals such that $E(x)$ is a Cantor set of Lebesgue measure zero for $q =p_n$. Nevertheless, in general it is not known what is the topological form of the set $E(x)$ for $q \in (q_1,q_2)$. In particular, there were no examples of $q$ in this interval for which $E(x)$ is a Cantorval. We will show how Theorem \ref{main} corresponds to the above results on the example of $E(4,3,2;q)$. For this example we have $q_1=\frac18, q_2= \frac16, q_3=\frac2{11}$. More on such sets was also showed in \cite{recover} and \cite{GM23}. Results concerning multigeometric sequences with ratio $q_1$ can be also found in \cite{Ke}, \cite{Ni} and \cite{GK}.

\begin{example} \label{18}
Let $k_n=3n$ and $m_n=1$ for all $n\in \N$. We will again use Theorem \ref{mami}. For $n\in \N$ we have  $a_{3n}=\frac23 a_{3n-1}$, $a_{3n-1} = \frac34 a_{3n-2}$ and $a_{3n+1} = \frac14a_{3n}.$ In particular, if we put $a_1 = \frac1{2}$, we get $a_2 = \frac38$, $a_3 = \frac14$, $a_{3n} = a_3 \cdot (\frac23\cdot \frac34 \cdot \frac14 )^{n-1} = \frac14 \cdot \frac1{8^{n-1}}$ and $a_{3n-1} = a_2 \cdot (\frac18)^{n-1} = \frac38 \cdot \frac1{8^{n-1}}$ and $a_{3n+1} = a_1\cdot \frac1{8^n}$, and thus $E(a_n)$ is the known Cantorval $E(4,3,2;\frac18)$.
\end{example}

\begin{example}\label{przedz}
It is known that $E(4,3,2;q)$ is a Cantorval for $q\in [\frac16,\frac2{11})$. We will show that it also follows from Theorem \ref{main}. First, observe that $k_n=3n$ and $m_n=1$. 

Indeed, for $n\in \N$ we have 
$$r_{3n}-a_{3n}=q^{n-1}(r_3-a_3) = q^n(\frac{9q}{1-q}-2)=q^n(\frac{9q-2+2q}{1-q})=q^n(11q-2) <0;$$
$$\delta_{3n-1}=r_{3n-1}-a_{3n-1}=q^{n-1}(r_2-a_2)=q^{n}(\frac{9q}{1-q}+2-3)=q^{n}(\frac{9q-1+q}{1-q})> 0,$$
$$\delta_{3n-2}=r_{3n-2}-a_{3n-2}= q^{n-1}(r_1-a_1) = q^{n} (\frac{9q}{1-q}+5-4)=q^{n} (\frac{9q+1-q}{1-q})>0.$$

We have 
$$\delta_{3n-2} - a_{3n} = q^{n} (\frac{8q+1}{1-q}-2)=q^{n} (\frac{10q-1}{1-q}) = \delta_{3n-1}>0$$
and
$$2r_{3n} - \delta_{3n-2} = 2q^n\frac{9q}{1-q} - q^n \frac{8q+1}{1-q} = q^n \frac{18q-8q-1}{1-q} = q^n\frac {10q-1}{1-q} = \delta_{3n-1}>0.$$
So, $\delta_{3n-2}$ satisfies $(*)$ for all $n \in \N$. Of course, $\delta_{3n-1}$ satisfies $(*')$. 
For $n \geq 1$ we have
$$\delta_{3n-1} -2\sum_{i=n+1}^\infty a_{3i} = q^{n}(\frac{10q-1}{1-q}) -2\sum_{i=n+1}^\infty 2q^{i} =q^{n}(\frac{10q-1}{1-q})  -q^n\frac{4q}{1-q} = q^n\frac{6q-1}{1-q} \geq 0,$$
which means that for any $n\in \N$ all numbers $M^n_i$ except $\delta_{3n-2}$ satisfy $(**)$. 
Thus, the assumptions of Theorem \ref{main} are satisfied, so $E(4,3,2;q)$ is a Cantorval.

\end{example}

\begin{example} \label{Palis}
Consider the set $E(4,3,2;q)$, where $q = \frac{\sqrt{57}-5}{16}$. Then $q \in (\frac18, \frac16)$ which is the "mysterious" interval.
First, we will show that $k_n = 3n$ (and $m_n=1$). 
Indeed, for $n\in \N$ we have 
$$r_{3n}-a_{3n}=q^{n-1}(r_3-a_3) = q^n(\frac{9q}{1-q}-2)\approx -0,57q^{n}<0;$$
$$\delta_{3n-1}:=r_{3n-1}-a_{3n-1}=q^{n-1}(r_2-a_2)=q^{n}(\frac{9q}{1-q}+2-3)=q^n\frac{10q-1}{1-q}\approx 0,43q^{n} > 0,$$
$$\delta_{3n-2}:=r_{3n-2}-a_{3n-2}= q^{n-1}(r_1-a_1) = q^{n} (\frac{9q}{1-q}+5-4)=q^n\frac{8q+1}{1-q}\approx 2,43q^{n}>0.$$

We will show that the assumptions of Theorem \ref{main} are satisfied. 

Let $n \in \N$. We have 
$$\delta_{3n-2}-a_{3n} = q^{n-1}(\delta_1-a_3)=q^{n-1}(\frac{8q+1}{1-q}-2)=q^n\frac{10q-1}{1-q}=\delta_{3n-1} >0$$
and 
$$2r_{3n}-\delta_{3n-2} =q^{n-1}(2r_3-\delta_1)=q^n\frac{18q-8q-1}{1-q}=\delta_{3n-1}.$$
So, condition $(*)$ is satisfied for $\delta_{3n-2}$. Since $\delta_{3n-1} >0$, $(*')$ also holds.
If $n > 1$, then we know that
$2r_{3n-3}-\delta_{3n-5} =\delta_{3n-5}-a_{3n-3}=\delta_{3n-4}$,
but we also have 
$$\delta
_{3n-4} - 2a_{3n} = q^{n-1}(\frac{10q-1}{1-q}-4q)=q^{n-1}\frac{4q^2+6q-1}{1-q}\approx 0,12 q^{n-1}> 0,$$
so number $\delta_{3n-4}$ satisfies $(**)$.
Moreover,
if $n > 2$, then
$$\delta_{3n-7} - 2a_{3n-3}-\delta_{3n-2} = q^{n-2}\frac{4q^2+6q-1}{1-q}-q^n\frac{8q+1}{1-q}=q^{n-2}\frac{-8q^3+3q^2+6q-1}{1-q} = 0,$$
so
$\delta_{3n-7} - 2a_{3n-3} =\delta_{3n-2}$. 
Therefore,
all numbers $M^n_i$ are equal to either $\delta_{3n-2}$ or $\delta_{3n-4}$.
So, condition $(*)$ or $(**)$ is satisfied for each $M^n_i$. Thus, by Theorem \ref{main}, $E(a_n)$ is a Cantorval.

\end{example}
It is worth pointing out that the above example is the first known multigeometric Cantorval such that $q \neq q_1$ and $q \notin [q_2,q_3)$, where $q_1,q_2,q_3$ are such as in \cite{BFS} and \cite{BBFS}. 

It is a quite natural question if the Palis-like conjecture for achievement sets of multigeometric sequences is true, that is, if every such a set either has the Lebesgue measure zero or contains an interval. For all known examples this is the case.
Example \ref{Palis} may be a step forward towards proving that this conjecture is true.

Looking on Examples \ref{18} and \ref{przedz} one may wonder if Theorem \ref{main} generalizes results from \cite{BFS} (at least for decreasing sequences). It turned out that if $q \in [q_2,q_3)$, then the assumptions of Theorem \ref{main} are not always satisfied. This is because, in Theorem \ref{main} we always assume that a gap is "covered" by a single overlap. But this is not always the case for multigeometric sequences with $q \in [q_2,q_3)$. Sometimes, single gap is covered by many overlaps although the geometric idea of this covering is very similar as in Theorem \ref{main} (it is the case for example for $E(32,17,16,8,4,2;q)$ from the paper \cite{MM}). It suggests that this theorem can be naturally generalized, by allowing more than one overlap to cover one gap. Another possible direction of generalization would be to consider sequences which are nonincreasing but with repeating terms. Similar idea was for example used to prove Theorem 4.19. in \cite{MNP} or main theorems in \cite{FN23} and \cite{Now2}. However, it seems that the assumptions and proof would be even more complicated and technical.

\end{document}